\newtheorem{theorem}{Theorem}[section]
\newtheorem{conjecture}[theorem]{Conjecture}
\newtheorem{claim}[theorem]{Claim}
\newtheorem{lemma}[theorem]{Lemma}
\def\ERT{P.~Erd\H{o}s, A.L.\ Rubin, and H.\ Taylor}
\def\ceil#1{\lceil #1 \rceil}
\begin{document}
%\pagewiselinenumbers
\title{Choice number of complete multipartite graphs
$K_{3*3,2*(k-5),1*2}$ and $K_{4,3*2,2*(k-6),1*3}$}
\author{
Wenjie He$^a$,
Lingmin Zhang$^b$,
Daniel W. Cranston$^c$,
Yufa Shen$^b$,
Guoping Zheng$^b$}
\footnotetext{$^*$Research supported in part by the Doctoral Foundation of Hebei Normal University of Science and Technology, P.R.\ China, and the Science Foundation of the Education Department of Hebei Province, P.R.\ China (Grant No. 2005108).
\smallskip

\textit{Email:} he\_wenjie@yahoo.com (Wenjie He).
\smallskip

\textit{Email:} lingmin9999@163.com (Lingmin Zhang).
\smallskip

\textit{Email:} dcransto@dimacs.rutgers.edu (Daniel W. Cranston).
\smallskip

\textit{Email:} syf030514@163.com (Yufa Shen).
}
\date{}
\maketitle
%\vspace{-.3in}
\begin{center}
\textit{
$^a$Applied Mathematics Institute, Hebei University of Technology,\\
Tianjin 300130, P.R. China \\
}
\bigskip

\textit{
$^b$Dept. of Math. and Phys., Hebei Normal University of Science and Technology,\\
Qinhuangdao 066004, P.R. China
}
\bigskip

\textit{$^c$DIMACS, Rutgers University,\\
Piscataway, NJ 08854, USA
}
\end{center}
\abstract{
A graph $G$ is called \textit{chromatic-choosable} if its choice number is equal
to its chromatic number, namely $Ch(G)=\chi(G)$.  Ohba has conjectured that every
graph $G$ satisfying $|V(G)|\leq 2\chi(G)+1$ is chromatic-choosable.
Since each $k$-chromatic graph is a subgraph of a complete $k$-partite graph, we
see that Ohba's conjecture is true if and only if it is true for every complete
multipartite graph.  However, the only complete multipartite graphs for which
Ohba's conjecture has been verified are:
$K_{3*2,2*(k-3),1}$,
$K_{3,2*(k-1)}$,
$K_{s+3,2*(k-s-1),1*s}$,
$K_{4,3,2*(k-4),1*2}$, and
$K_{5,3,2*(k-5),1*3}$.
In this paper, we show that Ohba's conjecture is true for two new classes
of complete multipartite graphs: graphs with three parts of size 3 and graphs with
one part of size 4 and two parts of size 3.
Namely, we prove that
%Ohba's conjecture for
$Ch(K_{3*3,2*(k-5),1*2})=k$ and
$Ch(K_{4,3*2,2*(k-6),1*3})=k$ (for $k\geq 5$ and $k\geq 6$, respectively).
}
\bigskip

\noindent
\textit{MSC}: 05C15
\bigskip

\noindent
\textbf{Keywords:} List coloring, Complete multipartite graphs, Chromatic choosable
graph, Ohba's conjecture
\bigskip

\section{Introduction}
\label{intro}
\textit{List colorings} of graphs were introduced independently by V.G. Vizing~\cite{vizing} and \ERT~\cite{erdos}.
For a graph $G=(V,E)$ and each vertex $u\in V(G)$, let $L(u)$ denote a list of colors available for $u$.  We call $L=\{L(u)|u\in V(G)\}$ a \textit{list assignment} of $G$,
and we call $L$ a \textit{$k$-list assignment}  if $|L(u)|\geq k$ for each $u\in V(G)$.
An \textit{$L$-coloring} of $G$ is a coloring in which each vertex receives a color from its own list such that adjacent vertices get different colors.
A graph $G$ is called \textit{$k$-choosable} if $G$ is $L$-colorable for every $k$-list assignment $L$.  The \textit{chromatic number} $\chi(G)$ of $G$ is the smallest integer $k$ such that $G$ is $k$-colorable, and the \textit{choice number} $Ch(G)$ of a graph $G$ is the
smallest integer $k$ such that $G$ is $k$-choosable.

It is easy to see that every graph $G$ satisfies $Ch(G)\geq\chi(G)$.
However, the equality does not necessarily hold.
In fact, Erd\H{o}s et al.~\cite{erdos} showed that there are bipartite graphs
with choice numbers that are arbitrarily large.
If a graph $G$ satisfies $Ch(G)=\chi(G)$, then $G$ is called \textit{chromatic-choosable}.
Much of the work on choice number has studied graph classes in which every
graph is chromatic-choosable.
The most famous conjecture in this area is the \textit{List Coloring Conjecture} (see~\cite{haggkvist}),
which states that every line graph is chromatic-choosable.
Galvin~\cite{galvin} proved the special case that every line graph of a bipartite
graph is chromatic-choosable (for more information about chromatic-choosability, we refer the reader to a survey by Woodall~\cite{woodall}).
In this paper, we focus our attention on Ohba's conjecture:

\begin{conjecture}[Ohba~\cite{ohba}]
\label{conj1}
If $|V(G)|\leq 2\chi(G)+1$, then $Ch(G)=\chi(G)$.
\end{conjecture}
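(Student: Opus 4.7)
The plan is to proceed by induction on $k := \chi(G)$, exploiting the reduction (noted just above the conjecture) that it suffices to handle the case in which $G = K_{n_1, \ldots, n_k}$ is a complete $k$-partite graph with parts $V_1, \ldots, V_k$ of sizes $n_1 \geq \cdots \geq n_k \geq 1$ summing to at most $2k+1$. Standard simplifications let us assume that $L$ is a bad $k$-list assignment with $|L(v)| = k$ for every $v$, and that $G$ is a minimum counterexample. The base cases $k \leq 2$ are immediate, so we focus on the inductive step.

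The heart of the argument is a reduction step. Fix a color $c \in \bigcup_v L(v)$ and let $U_c := \{v \in V(G) : c \in L(v)\}$. Any independent set $S \subseteq U_c$ lies entirely in one part $V_i$; coloring all of $S$ with $c$ leaves a $(k-1)$-list-coloring problem on $G - S$ with lists $L'(v) = L(v) \setminus \{c\}$. For the inductive hypothesis to apply we need $|V(G - S)| \leq 2(k-1) + 1$, i.e., $|S| \geq |V(G)| - 2k + 1$. So it suffices to prove the following \textbf{key lemma}: there exist a color $c$ and a set $S \subseteq U_c$ contained in a single part with $|S| \geq \max\{1, |V(G)| - 2k + 1\}$.

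We would attack the key lemma by a Hall-type argument on the bipartite color-incidence graph $H$ joining each vertex $v$ to the colors of $L(v)$. When $|V(G)| \leq 2k$, the requirement $|S| \geq 1$ is automatic, so a single greedy color suffices and the induction closes immediately; this recovers Ohba's elementary case. The delicate case is $|V(G)| = 2k + 1$, where $|S| \geq 2$ is required at every step. The plan is to weight each color by the largest monochromatic independent subset of $U_c$ it supports, to exploit the identity $\sum_v |L(v)| = (2k+1)k$, and to argue by double counting that some color $c$ must have two vertices of $U_c$ in a common part. Where the crude count fails we would perform a local exchange between lists of vertices in small parts and large parts to force the desired monochromatic pair into existence.

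The main obstacle, and the reason the conjecture has resisted proof in general, is precisely this key lemma in the extremal case $|V(G)| = 2k+1$: one must rule out perverse list assignments in which every color is spread across so many parts that only singleton independent sets are available. The special cases listed in the abstract correspond to part-size profiles for which the exchange can be carried out explicitly; the hard part of a general proof will be producing a uniform argument covering every profile with $\sum n_i = 2k+1$, likely by combining the double-counting outlined above with a structural dichotomy that either locates the desired $(c, S)$ or else forces enough color repetitions within parts to recolor $G$ greedily from the bottom up.
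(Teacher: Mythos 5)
This statement is Ohba's \emph{Conjecture}, and the paper does not prove it: the paper only verifies it for two specific families of complete multipartite graphs (Theorems~\ref{theorem1} and~\ref{theorem2}), using Kierstead's lemma, Hall's condition, and detailed case analysis. Your proposal is likewise not a proof; it is a plan whose entire mathematical content is concentrated in the ``key lemma,'' which you explicitly leave unproven and which is exactly the open difficulty. So there is a genuine gap: the step that would close the induction in the extremal case $|V(G)|=2k+1$ is missing, and no double-counting of $\sum_v|L(v)|=(2k+1)k$ alone is known to produce it.

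Beyond the missing lemma, the reduction step itself is flawed as stated. If $S\subseteq U_c$ is a proper subset of a part $V_i$, then $G-S$ is still a complete $k$-partite graph, so $\chi(G-S)=k$, while the truncated lists $L'(v)=L(v)\setminus\{c\}$ have size only $k-1<\chi(G-S)$; the inductive hypothesis (which requires lists of size at least the chromatic number of the smaller graph) says nothing in that situation. The induction on $\chi$ only goes through when $S$ is an \emph{entire} part, and one cannot in general guarantee a color common to all vertices of some part --- this is precisely why the paper's arguments, after disposing of the easy case where a whole part shares a color, must switch to the machinery of Lemma~\ref{keylemma} (allowing the assumption $|L(V)|<|V|$), Hall's theorem, and greedy completions tailored to the particular part-size profiles $K_{4,3*2,2*(k-6),1*3}$ and $K_{3*3,2*(k-5),1*2}$. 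In short: your outline reproduces the well-known heuristic framework, but the conjecture's difficulty lives exactly where your argument stops, and the partial reduction you do spell out needs $S$ to be a full part for the induction to be valid.
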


Because every $k$-chromatic graph is a subgraph of a complete $k$-partite graph,
Ohba's conjecture is true if and only if it is true for every complete
multipartite graph.
Thus, we can restate Ohba's conjecture in the following way.

\begin{conjecture}
\label{conj2}
If $G$ is a complete $k$-partite graph with $|V(G)|=2k+1$, then $Ch(G)=\chi(G)=k$.
\end{conjecture}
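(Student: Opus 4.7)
The plan is to attempt a proof by induction on $k$, combined with a case analysis on the partition profile of $G$. Write $G = K_{n_1, \ldots, n_k}$ with $n_1 \geq \cdots \geq n_k$ and $\sum_i n_i = 2k+1$, so that $\sum_i (n_i - 1) = k+1$; this ``excess'' identity tightly constrains which profiles can occur. The base cases $k \leq 2$ are immediate: for $k=1$, $G$ is three isolated vertices, trivially $1$-choosable; for $k=2$, both $K_{4,1}$ and $K_{3,2}$ are $2$-choosable by the Erd\H{o}s--Rubin--Taylor characterization of $2$-choosable graphs.

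For the inductive step, fix an arbitrary $k$-list assignment $L$. I would search for a color $c$ and a set $S \subseteq V(G)$ that is a union of one or more entire parts $V_i$, including at least one part of size $\geq 2$, such that $c \in L(v)$ for every $v \in S$. If such $(c, S)$ can be found, assign $c$ to every vertex of $S$ and set $L'(v) = L(v) \setminus \{c\}$ on the remaining vertices; then $G - S$ is a complete multipartite graph on at most $2k-1$ vertices with at most $k-1$ parts, and $L'$ is a $(k-1)$-list assignment, so the inductive hypothesis (applied after inflating small lists if necessary) yields an $L'$-coloring of $G-S$ that extends to an $L$-coloring of $G$. The existence of $(c, S)$ would be pursued by a Hall-type argument on the bipartite incidence between colors and parts: the counting bound $\sum_v |L(v)| \geq (2k+1)k$ forces some color to appear in at least $k$ lists, and by iterating or strengthening this pigeonhole one tries to produce a color that covers an entire small part simultaneously.

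The principal obstacle---and the reason Conjecture~\ref{conj2} remains open in full generality---is that for ``thin'' profiles (many parts of exact size $2$) an adversarial $L$ can arrange for every color that is plentiful across parts to miss at least one vertex of each small part, blocking every natural choice of $(c, S)$. Each of the previously verified profiles ($K_{3,2*(k-1)}$, $K_{3*2,2*(k-3),1}$, $K_{s+3,2*(k-s-1),1*s}$, $K_{4,3,2*(k-4),1*2}$, $K_{5,3,2*(k-5),1*3}$) corresponds to a pattern where this obstruction can be unpicked by a tailored combinatorial argument, and the aim of the present paper is to extend the same methodology to two further profiles, $K_{3*3,2*(k-5),1*2}$ and $K_{4,3*2,2*(k-6),1*3}$. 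A uniform argument covering every partition profile at once would require a substantially new idea beyond the Hall-type reduction scheme; absent that, the realistic plan is to enlarge incrementally the family of profiles for which the reduction provably succeeds, and to hope that a common combinatorial structure eventually emerges that could be abstracted into a proof of the full conjecture.
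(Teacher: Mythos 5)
The statement you were asked about is Conjecture~\ref{conj2}, i.e.\ Ohba's conjecture restated for complete multipartite graphs; the paper does not prove it and offers no proof to compare against. It only \emph{verifies} the conjecture for two specific profiles, $K_{3*3,2*(k-5),1*2}$ and $K_{4,3*2,2*(k-6),1*3}$ (Theorems~\ref{theorem1} and~\ref{theorem2}), on top of the previously known families listed in Section~\ref{intro}. Your proposal, to its credit, recognizes this: it is not a proof but a sketch of an induction that you yourself concede cannot be closed. The genuine gap is exactly the step you flag: the existence of a color $c$ common to all vertices of some part of size at least $2$. The counting bound $\sum_v|L(v)|\geq (2k+1)k$ only guarantees a color appearing in many lists across the graph; an adversary can give the two vertices of every part of size $2$ disjoint lists (and avoid a common color on the larger parts), so no pair $(c,S)$ of the kind you need is forced, and the induction never gets started. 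No amount of ``iterating or strengthening the pigeonhole'' repairs this, which is precisely why the conjecture was open beyond special profiles at the time of this paper.

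It is worth noting that your reduction scheme does coincide with the first move in both of the paper's actual proofs: when some part of size $2$ or $3$ does have a common color, the paper uses it on the whole part, deletes it, and recurses (or invokes an earlier result such as $Ch(K_{4,3,2*(k-5),1*2})=k-1$). The real work in the paper is the complementary base case, where no part has a common color; there the authors use Kierstead's Lemma~\ref{lemma1} and their size-sensitive generalization Lemma~\ref{keylemma} to assume $|L(V)|<|V|$, extract colors shared by two (not all three or four) vertices of a large part, greedily dispose of the parts of size $2$, and finish with Hall's condition --- an analysis that is tailored to the particular profiles and does not generalize to arbitrary partition profiles. So your assessment of where the difficulty lies is accurate, but the proposal proves nothing beyond what is already implicit in the paper's setup, and Conjecture~\ref{conj2} itself remains unproved by both the paper and your argument.
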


As a result of the formulation in Conjecture~\ref{conj2}, all of the
work done on proving Ohba's conjecture has focused on proving it for
specific classes of complete multipartite graphs.  We use the
notation $K_{r*s}$ to denote a complete $s$-partite graph in which
each part has $r$ vertices. Analagously, we  use  the notation
$K_{r*s,t*u}$ to denote a complete $(s+u)$-partite graph, in which
$s$ parts have $r$ vertices and $u$ parts have $t$ vertices. Here we
list the complete multipartite graphs for which the choice number is
known.

\begin{theorem}[\cite{erdos}]
\label{first}
$Ch(K_{2*k})=k.$
\end{theorem}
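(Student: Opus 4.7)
The lower bound $Ch(K_{2*k})\ge k$ is immediate: $K_{2*k}$ contains a $K_k$ (pick one vertex from each part), so $\chi(K_{2*k})=k$, and $Ch(G)\ge\chi(G)$ in general. For the upper bound I would argue by induction on $k$, with $k=1$ trivial.

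For the inductive step, given a $k$-list assignment $L$ on $K_{2*k}$ with parts $V_i=\{u_i,v_i\}$, I would split on whether some part has a color common to both of its lists. In Case~1, $L(u_i)\cap L(v_i)\ne\emptyset$ for some $i$: choose $c$ in this intersection, assign $c$ to both $u_i$ and $v_i$, and delete $c$ from every other list. Each surviving list drops in size by at most one, leaving a $(k-1)$-list assignment on $K_{2*(k-1)}$ to which the induction hypothesis applies directly.

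In the complementary Case~2, $L(u_i)\cap L(v_i)=\emptyset$ for every $i$. Here I would prove the stronger statement that all $2k$ vertices can be assigned \emph{pairwise distinct} colors from their lists, by checking Hall's condition on the bipartite graph whose left side is $V(K_{2*k})$, whose right side is $\bigcup_v L(v)$, and whose edges are $v\sim c$ iff $c\in L(v)$. For a test set $S$ of size $\le k$, Hall follows from $|L(v)|\ge k$ alone; for $|S|>k$, pigeonhole forces $S$ to contain both vertices of some part $V_i$, and Case~2 disjointness gives $|L(u_i)\cup L(v_i)|=|L(u_i)|+|L(v_i)|\ge 2k\ge|S|$. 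A saturating matching then yields a proper $L$-coloring. The one step that requires real thought is this Hall verification, and the key observation powering it is that the very hypothesis of Case~2 (disjointness within each part) doubles the local color budget from $k$ to $2k$ --- exactly what is needed to defeat the pigeonhole threat once $|S|$ exceeds $k$.
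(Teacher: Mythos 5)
Your proof is correct. The paper does not prove this statement at all --- it is quoted as a known result of Erd\H{o}s, Rubin, and Taylor --- so there is no internal proof to compare against; your argument is the standard one for this theorem (split on whether some part has a common color, recurse if so, and otherwise exploit the within-part disjointness to verify Hall's condition and extract a rainbow coloring), and it is also very much in the spirit of the reductions the paper itself uses in Sections 3 and 4, where parts with a common color are handled first and Hall's Theorem finishes the remaining cases. The Hall verification is sound: for $|S|\le k$ any single list suffices, and for $|S|>k$ the pigeonhole step together with $|L(u_i)\cup L(v_i)|=2k\ge |S|$ closes the argument.
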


\begin{theorem}[\cite{kierstead}]
$Ch(K_{3*k})=\ceil{\frac{4k-1}{3}}.$
\end{theorem}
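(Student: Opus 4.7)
The plan is to establish $Ch(K_{3*k}) \le \lceil(4k-1)/3\rceil$ and $Ch(K_{3*k}) \ge \lceil(4k-1)/3\rceil$ separately. Unlike the $K_{2*k}$ result quoted earlier, which follows quickly from Hall's marriage theorem on a single bipartite auxiliary graph, parts of size three make the problem substantially harder, and both inequalities require nontrivial arguments.

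For the lower bound, I would exhibit an explicit list assignment $L$ with $|L(v)| = \lceil(4k-1)/3\rceil - 1$ for every vertex $v$ that admits no proper $L$-coloring. A natural construction groups the parts into blocks of three and, within each block $A_i, A_{i+1}, A_{i+2}$, assigns to the nine vertices lists drawn from a shared four-color pool in such a way that any proper coloring of the block would consume at least five colors; since only four are available in the pool, no global coloring can exist. Handling residues $k \not\equiv 0 \pmod 3$ requires a small auxiliary construction on the leftover parts.

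For the upper bound, I would use induction on $k$. Letting $m = \lceil(4k-1)/3\rceil$, the arithmetic identity $\lceil(4k-1)/3\rceil - \lceil(4(k-3)-1)/3\rceil = 4$ suggests the induction step: given any $m$-list assignment $L$ on $K_{3*k}$, the plan is to locate three parts together with a proper coloring of their nine vertices using only four distinct colors in total. After removing those parts and deleting the four used colors from the lists at the remaining vertices, what remains is an $(m-4)$-list assignment on $K_{3*(k-3)}$, to which the inductive hypothesis applies. The existence of such three parts and such a four-color coloring is the structural content of the theorem, and I would prove it by a case analysis on color multiplicities. When some color $c$ appears in the lists of vertices spanning many parts, one can use $c$ on several parts simultaneously and complete via a Hall-type matching; when no color is so popular, an averaging or counting argument singles out three parts whose lists overlap sufficiently to be colored jointly from a four-element palette.

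The principal obstacle is precisely this structural lemma, whose proof demands careful bookkeeping and a nontrivial case split. A secondary difficulty is the verification of several small base cases for $k \le 5$, since the expression $\lceil(4k-1)/3\rceil$ takes different residue-dependent forms and the inductive reduction does not apply until $k$ is large enough. Finally, the tightness of the bound $\lceil(4k-1)/3\rceil$ means the Hall and counting arguments must be executed sharply: any slack in list-size bookkeeping after the deletion step would weaken the induction and fail to recover the exact formula.
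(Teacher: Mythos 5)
First, a point of comparison: the paper does not prove this statement at all --- it is quoted from Kierstead \cite{kierstead} --- so there is no in-paper proof to measure your attempt against, only the machinery the paper imports from that source (Lemmas~2.2--2.3, i.e.\ the reduction to list assignments with $|L(V)|<|V|$ followed by Hall's condition). Judged on its own, your proposal has a genuine gap at its core. The structural lemma driving your upper-bound induction --- that one can always find three parts whose nine vertices admit a proper coloring using only four colors in total --- is false for general list assignments. In $K_{3*k}$ every color class is an independent set and hence lies inside a single part; a part of size three can be colored with a single color only when some color belongs to all three of its lists, and otherwise it needs at least two colors, which moreover must be distinct from all colors used on other parts. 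So if no part has such a universal color (the generic situation), any three parts require at least six colors, and no four-color coloring of three parts exists. Your hedge that ``an averaging or counting argument singles out three parts whose lists overlap sufficiently'' cannot rescue this: the obstruction is structural, not a matter of bookkeeping. The natural repair is essentially Kierstead's actual route: if some part has a color common to all three of its lists, use it and recurse on $K_{3*(k-1)}$ with lists of size $\lceil (4k-1)/3\rceil-1\ge\lceil(4(k-1)-1)/3\rceil$; otherwise invoke the reduction to $|L(V)|<|V|$ and finish with a Hall-type analysis. That is a one-part-at-a-time argument, not a three-parts-and-four-colors induction.

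The lower bound is likewise not established, and as literally described the construction cannot work: the lists must have size $\lceil(4k-1)/3\rceil-1$, which grows like $4k/3$ and exceeds four for every $k\ge5$, so they cannot be ``drawn from a shared four-color pool'' attached to a block of three parts. A correct construction must let each vertex's list reach outside its block (or be organized entirely differently), and the verification that every family of pairwise disjoint color sets covering all $k$ parts needs more colors than the palette contains is exactly the content you have deferred. In summary, both halves of the argument rest on unproven claims, and the central claim of the upper-bound half is false as stated.
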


\begin{theorem}[\cite{ohba2}]
$Ch(K_{3*r,1*t})=\max(r+t,\ceil{\frac{4r+2t-1}{3}}.$
\end{theorem}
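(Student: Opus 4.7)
Let $f(r,t) := \max(r+t, \ceil{(4r+2t-1)/3})$.

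For the lower bound, the inequality $Ch(G) \geq \chi(G) = r+t$ is immediate. For $Ch(G) \geq \ceil{(4r+2t-1)/3}$, I would give an explicit list assignment of size $\ceil{(4r+2t-1)/3} - 1$ that admits no proper coloring, adapting Kierstead's bad list assignment for $K_{3*r}$ (which witnesses the lower bound in his theorem above) by choosing $t$ extra overlapping lists for the singleton parts so that the pigeonhole obstruction propagates. Short casework on $(4r+2t-1) \bmod 3$ handles the rounding.

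For the upper bound I would induct on $r+t$, with base cases $r=0$ (where $G = K_t$ and $Ch(K_t) = t = f(0,t)$) and $t=0$ (Kierstead's theorem, stated above). The driving calculation is the monotonicity of $f$: a direct casework on the residue of $4r+2t-1$ modulo $3$ shows $f(r,t) \geq f(r-1,t) + 1$ always, while $f(r,t) - f(r,t-1)$ can equal $0$ in a small set of ``tight'' cases (essentially when $\ceil{(4r+2t-1)/3}$ strictly dominates and $4r+2t-1 \equiv 0 \pmod 3$). Given a $k$-list assignment $L$ with $k = f(r,t)$, I would look first for a triple part $\{u_1, u_2, u_3\}$ admitting a color $c \in L(u_1) \cap L(u_2) \cap L(u_3)$; if one exists, color the whole triple with $c$, delete those three vertices and $c$ from the remaining lists, and apply induction to $K_{3*(r-1), 1*t}$ with $(k-1)$-lists. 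If no such globally-common color exists in any triple but some singleton $v$ admits a color $c \in L(v)$ we may safely remove (for instance, one not in the list of any other vertex), delete $v$ and induct on $t$ in the non-tight regime.

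The main obstacle is the adversarial case in which no triple has a color common to all three of its lists and no singleton offers a safe reduction. Here a single deletion step tends to lose the inductive margin: using two colors on a triple forces the remaining list sizes down to $k-2$, but often $f(r,t) - f(r-1,t) = 1$ and the induction fails. My working plan is to strengthen the inductive statement to cover the broader family $K_{3*r, 2*s, 1*t}$, so that using a shared color on two vertices of a triple (leaving the third in a part of size $2$) keeps us in the class; one then needs the correct analogue of $f$ for this three-parameter family and the corresponding monotonicities, which is the most delicate piece of the argument. As a backup, I would instead recast $L$-colorability as a system-of-distinct-representatives problem on the bipartite incidence between parts and colors, and verify Hall's condition using the bound $|L(u)| \geq f(r,t)$ together with lower bounds on $|\bigcup_{u \in S} L(u)|$ for subsets $S$ of parts.
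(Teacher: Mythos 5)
This theorem is not proved in the paper at all: it is quoted from Ohba's article \cite{ohba2} as known background, so there is no in-paper argument to measure you against. Judged on its own terms, your proposal is a plan rather than a proof, and the plan leaves the genuinely hard step open. You say so yourself: in the ``adversarial case'' where no part of size $3$ has a color common to all three of its lists and no singleton admits a safe color, your induction loses a unit of margin and fails. That case is not a corner case --- it is the heart of the theorem (it is already the entire difficulty in Kierstead's proof that $Ch(K_{3*k})=\ceil{\frac{4k-1}{3}}$), and neither of your two proposed escapes is carried out. The first escape (strengthening the induction to $K_{3*r,2*s,1*t}$) requires you to first guess and then prove the choice number of a strictly larger three-parameter family, which is harder than the original statement and not known to be tractable by the same induction. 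The second escape (verifying Hall's condition directly) is the right instinct --- it is essentially what Kierstead's Lemmas~\ref{lemma1} and~\ref{lemma2} in this paper are designed for, via the reduction to list assignments with $|L(V)|<|V|$ and a count showing that many colors must appear on at least two vertices of each triple --- but you only gesture at it; you do not exhibit the subsets $S$ for which Hall's condition is tight, nor the counting that rules them out.

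The lower bound is also incomplete as written. ``Adapting Kierstead's bad list assignment \ldots\ so that the pigeonhole obstruction propagates'' is not a construction: you need to exhibit concrete lists of size $\ceil{\frac{4r+2t-1}{3}}-1$ on all $3r+t$ vertices and prove non-colorability, and the interaction between the singleton parts and the triples is exactly where the arithmetic $4r+2t-1=(3r+t)+(r+t)-1$ has to be earned. The monotonicity computations you do state ($f(r,t)\geq f(r-1,t)+1$, and the identification of the tight cases for $f(r,t)-f(r,t-1)$) are correct and would be needed in any inductive proof, so the skeleton is sound; but as it stands the argument has a named, unresolved gap precisely where the theorem is difficult.
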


\begin{theorem}[\cite{gravier}]
If $k\geq 3$, then $Ch(K_{3*2,2*(k-2)})=k.$
\end{theorem}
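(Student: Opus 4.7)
\medskip
\noindent
\textbf{Proof plan.} The inequality $Ch(G)\ge \chi(G)=k$ is immediate for $G=K_{3*2,2*(k-2)}$, so the only task is to produce an $L$-coloring from an arbitrary $k$-list assignment $L$ on $G$. I would argue by induction on $k$, with the base case $k=3$ (i.e., $K_{3,3,2}$ is $3$-choosable) settled by a direct case analysis on the lists of the two size-3 parts.

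For the inductive step, the key observation is that any color class in a proper coloring of a complete multipartite graph lies inside a single part, so the cheapest way to eliminate a part is to color it monochromatically, which is possible precisely when some color $c$ lies in $L(v)$ for every vertex $v$ of that part. My first move is to hunt for such a pair $(V_i,c)$. If one exists, I color $V_i$ with $c$, delete $V_i$ from $G$, and remove $c$ from every other list; the surviving lists have size at least $k-1$. If $V_i$ is a 2-part, the leftover graph is $K_{3*2,2*(k-3)}$ and the inductive hypothesis applied at $k-1$ finishes the job. If $V_i$ is one of the 3-parts, the leftover graph is $K_{3,2*(k-2)}=K_{3,2*((k-1)-1)}$, which is $(k-1)$-choosable by the already-verified Ohba case for $K_{3,2*(m-1)}$ (one of the cases quoted in the abstract) applied with $m=k-1$.

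The hard case is the one in which no part admits a common color, so $\bigcap_{v\in V_i}L(v)=\emptyset$ for every part $V_i$. This forces each 2-part $\{a,b\}$ to satisfy $|L(a)\cup L(b)|\ge 2k$ and each 3-part $\{x,y,z\}$ to satisfy $|L(x)\cup L(y)\cup L(z)|\ge \lceil 3k/2\rceil$ (since no color appears in all three of its lists). I would close out this case by a Hall-type bipartite matching argument: build an auxiliary bipartite graph in which each part is matched to a ``color packet'' (a single color for some parts, a pair of colors for others) that covers it, and verify Hall's condition using the plentiful colors supplied by the union bounds above. The main obstacle is precisely this packaging step: the demands of the 2-parts and the 3-parts compete for the same pool of colors, so the Hall-style count has to track both kinds of requirements simultaneously, and a handful of small or unusually degenerate list configurations will likely have to be dealt with by hand before the matching can be invoked cleanly.
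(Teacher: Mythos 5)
This theorem is not proved in the paper at all; it is quoted from Gravier and Maffray~\cite{gravier}, so there is no in-paper argument to compare yours against, and your proposal must stand on its own. The easy half of your induction (some part has a color common to all of its vertices) is fine in outline, but note one provenance issue: the sub-case where a 3-part is collapsed leans on $Ch(K_{3,2*(k-1)})=k$, and in this paper's chain of citations that fact is obtained as a subgraph consequence of the very theorem you are proving (delete one vertex from a 3-part of $K_{3*2,2*(k-2)}$). You would need an independent source for it to avoid circularity.

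The genuine gap is the hard case, exactly where you say the difficulty lies. After Kierstead's reduction you may assume $|L(V)|\leq 2k+1$, while a proper coloring must use two distinct colors on every 2-part (their lists are disjoint) and at least two on each 3-part; that is, you must extract $2k$ mutually distinct colors subject to structural constraints. For a 3-part the ``packet'' is not an arbitrary pair of colors from the union of its three lists: it must consist of a color lying in the intersection of two of the lists together with a color from the third. This is a transversal problem over structured packets, not a bipartite matching that Hall's theorem settles out of the box, and the union bounds you record ($|L(a)\cup L(b)|\geq 2k$ for 2-parts, at least $\lceil 3k/2\rceil$ for 3-parts) do not by themselves exclude the bad configurations --- you concede as much when you defer ``a handful of small or unusually degenerate list configurations'' to be handled separately. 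Those configurations, together with the unexecuted base case $K_{3,3,2}$, are the entire content of the theorem; until they are written out, what you have is a plan rather than a proof.
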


\begin{theorem}[\cite{enomoto}]
\label{useful}
$Ch(K_{4,2*(k-1)})=
\begin{cases}
k, & \mbox{if $k$ is odd} \\
k+1, &\mbox{if $k+1$ is even.}
\end{cases}
$
\end{theorem}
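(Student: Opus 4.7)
The statement has two substantive parts: an upper bound $Ch(K_{4,2*(k-1)}) \le k$ for odd $k$ and a lower bound $Ch(K_{4,2*(k-1)}) \ge k+1$ for even $k$. The matching upper bound $\le k+1$ for even $k$ follows for free: $K_{4,2*(k-1)}$ is an induced subgraph of $K_{4,2*k}$ obtained by deleting one size-$2$ part, and the latter graph is $(k+1)$-choosable by the odd case applied to $k+1$, so by monotonicity of choosability under subgraphs so is $K_{4,2*(k-1)}$. Throughout I will write $A=\{a_1,a_2,a_3,a_4\}$ for the part of size $4$ and $B_i=\{b_i,c_i\}$ for the size-$2$ parts ($i=1,\ldots,k-1$), and let $L$ be the given list assignment.

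For the upper bound with odd $k$, the plan is to reduce to Theorem~\ref{first}. The clean case is $\bigcap_{i=1}^4 L(a_i)\neq\emptyset$: pick a common color $\alpha$, assign it to all four $a_i$, and notice that $L'(v)=L(v)\setminus\{\alpha\}$ is a $(k-1)$-list assignment on $K_{2*(k-1)}$, which is $L'$-colorable by Theorem~\ref{first}. When no common color exists, one must use at least two colors on $A$; the strategy is to find a pair $\{\alpha,\beta\}$ meeting every $L(a_i)$ and argue that after coloring $A$ from $\{\alpha,\beta\}$, the residual $(k-2)$-list assignment on $K_{2*(k-1)}$ still admits a proper coloring via a Hall-type matching condition on the $B_j$ lists. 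The parity of $k$ enters at this step through a double count over the multiplicities with which each color appears in $\bigcup_j L(B_j)$: for odd $k$ the count guarantees a favorable pair $\{\alpha,\beta\}$, whereas for even $k$ it can fail (accounting for the lower bound).

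For the lower bound with even $k$, I would build an explicit bad $k$-list assignment. The base case $k=2$ on $K_{4,2}$ uses the four colors $\{1,2,3,4\}$:
\[
L(a_1)=\{1,3\},\ L(a_2)=\{1,4\},\ L(a_3)=\{2,3\},\ L(a_4)=\{2,4\},\ L(b_1)=\{1,2\},\ L(c_1)=\{3,4\};
\]
each of the four ways to color $B_1$ (namely $b_1\in\{1,2\}$, $c_1\in\{3,4\}$) leaves some $a_i$ with empty residual list. For general even $k$, I would generalise this Latin-square-style design using $k$ colors partitioned into $k/2$ pairs, arranged so that every one- or two-color choice on $A$ exhausts some $B_j$'s list. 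The evenness of $k$ is what makes the pair-count balance exactly. The main obstacle is precisely the dovetailing of this even-$k$ construction with the counting lemma underlying the odd-$k$ upper bound: showing that a favorable pair $\{\alpha,\beta\}$ always exists for odd $k$, and producing a tight configuration that defeats every such pair for even $k$, is the real content of the theorem.
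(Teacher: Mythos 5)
This statement is quoted from Enomoto, Ohba, Ota, and Sakamoto~\cite{enomoto}; the paper gives no proof of it, so there is nothing internal to compare your argument against. Judged on its own, your proposal has the right architecture (odd upper bound, even lower bound, even upper bound deduced from the odd case applied to $k+1$, which is a correct and clean reduction), and your $k=2$ example on $K_{4,2}$ is a correct witness that $Ch(K_{4,2})=3$. But as a proof it has two genuine gaps, both of which you yourself flag as ``the real content of the theorem,'' which means the hard direction is not actually done.

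First, the odd-$k$ upper bound. Your plan is: find a pair $\{\alpha,\beta\}$ meeting every $L(a_i)$, color $A$ with it, and finish on $K_{2*(k-1)}$ by a Hall-type condition. Even the first step is not justified: if the four $k$-lists on $A$ have no common color, a covering pair need not exist (four pairwise disjoint lists require four colors to cover, and two lists of size $k$ inside a universe of $2k+1$ colors can be disjoint, so even after invoking Lemma~\ref{lemma2} to assume $|L(V)|<|V|=2k+2$ this is not automatic). And if you are forced to use $j\geq 2$ distinct colors on $A$, some part $B_i$ may lose $j$ colors, dropping below the $k-1$ needed to apply Theorem~\ref{first} directly; the residual coloring genuinely requires the Hall/Kierstead machinery (Lemma~\ref{lemma1}) applied to a maximal deficient set, and the ``double count over multiplicities'' that is supposed to make parity enter is never stated, let alone carried out. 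Second, the even-$k$ lower bound: you give only the base case $k=2$ and assert that a ``Latin-square-style design'' generalizes. The general bad $k$-list assignment (how many colors, how they are distributed over the four vertices of $A$ and over the $k-1$ parts of size $2$, and why every proper choice on the $B_i$'s kills some $a_j$) is exactly what must be exhibited and verified, and it is absent. So the proposal is a plausible outline of the Enomoto et al.\ strategy rather than a proof.
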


\begin{theorem}[\cite{enomoto}]
If $m\leq 2s+1$, then $Ch(K_{m,2*(k-s-1),1*s})=k.$
\end{theorem}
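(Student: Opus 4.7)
The plan is to induct on $k$. For the base case $k=s+1$ (no parts of size $2$), the graph is $K_{m,1*s}$ with $m\leq 2s+1$ and each list of size $s+1$. I would apply Hall's theorem to the $s$ singleton lists $L(w_1),\dots,L(w_s)$ (each of size $s+1\geq s$) to produce a system of distinct representatives $c_1,\dots,c_s$, colour $w_j$ with $c_j$, and then extend vertex-by-vertex on the big independent part, where $L(u)\setminus\{c_1,\dots,c_s\}$ is nonempty because $|L(u)|=s+1>s$.

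For the inductive step, fix a $k$-list assignment $L$ with $|L(v)|=k$ and look for a reduction that colours one entire part with a single colour, shrinks $k$ by one, and leaves an instance in the same family. Three natural reductions present themselves. \emph{Reduction A:} if some size-$2$ part $\{a,b\}$ satisfies $L(a)\cap L(b)\neq\emptyset$, assign both a common colour $c$, delete $c$ from the other lists, and induct on $K_{m,2*(k-s-2),1*s}$, in which $m\leq 2s+1$ still holds. \emph{Reduction B:} if some colour $c$ lies in every list on the big part $V_0$, colour all of $V_0$ with $c$; the residual graph $K_{2*(k-s-1),1*s}$ is a subgraph of $K_{2*(k-1)}$, which is $(k-1)$-choosable by Theorem~\ref{first}. \emph{Reduction C:} otherwise take a singleton $w_j$, colour it with some $c\in L(w_j)$, and induct on $K_{m,2*(k-s-1),1*(s-1)}$; this reduction requires $m\leq 2(s-1)+1=2s-1$ to preserve the hypothesis.

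The main obstacle is therefore the regime $m\in\{2s,2s+1\}$ in which Reductions A and B both fail and C is not directly permitted. My plan here is to exploit the size of $V_0$. Since $|V_0|\geq 2s\geq 2$ and no single colour is in $L(u)$ for all $u\in V_0$, a pigeonhole argument on the multiset $\bigcup_{u\in V_0}L(u)$ forces the existence of two colours $c_1,c_2$ and a partition $V_0=U_1\cup U_2$ with $c_i\in L(u)$ for every $u\in U_i$. Using $c_1,c_2$ to $2$-colour $V_0$ spends two list slots, which is one too many for a pure recursion, so the goal is to pair this $2$-colouring with the simultaneous colouring of a singleton (and, when $s$ is small, one size-$2$ part) so that the two colours eliminate at least three parts, matching the slot budget.

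To execute this coupling I would set up an auxiliary bipartite graph whose left side is the singletons $w_1,\dots,w_s$ and the pair-parts $V_1,\dots,V_{k-s-1}$, and whose right side is the colours appearing as viable choices, with $w_j$ adjacent to every $c\in L(w_j)\setminus\{c_1,c_2\}$ and a pair adjacent to any colour common to both its vertices; invoke a defect version of Hall's theorem to select a compatible SDR and feed the leftover graph into the induction hypothesis. Verifying that the Hall condition holds in all sub-cases is the point at which the inequality $m\leq 2s+1$ is used tightly, and this bookkeeping (together with the symmetric analysis when one of $c_1,c_2$ happens to lie in a singleton list) is the step I expect to require the most care.
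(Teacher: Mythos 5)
First, a point of reference: the paper does not prove this statement; it is quoted verbatim from Enomoto, Ohba, Ota, and Sakamoto~\cite{enomoto}, so there is no in-paper proof to compare yours against, and your proposal must stand on its own. Your base case and Reductions A--C are sound, and you correctly isolate $m\in\{2s,2s+1\}$ as the regime where all the content lives. The trouble is that both of the ideas you offer for that regime are defective. The claimed pigeonhole step --- that when no colour lies in every list on $V_0$ there exist two colours $c_1,c_2$ and a partition $V_0=U_1\cup U_2$ with $c_i\in L(u)$ for all $u\in U_i$ --- is not justified and is false as stated: nothing you have assumed prevents the lists on $V_0$ from being pairwise disjoint, in which case no two colours cover more than two of the $m\geq 3$ vertices. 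To even make the claim plausible you must first invoke the Kierstead reduction (Lemma~\ref{lemma2} or~\ref{keylemma}) to force $|L(V)|<|V|$, and even then the covering assertion needs an actual counting argument rather than a one-line appeal to pigeonhole; you never bring $|L(V)|<|V|$ into play at this point.

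The second and more structural gap is the budget. In a complete multipartite graph every vertex outside $V_0$ is adjacent to every vertex of $V_0$, so the colours $c_1,c_2$ placed on $V_0$ cannot be reused on any singleton or pair-part; each colour is confined to a single part, and two colours therefore eliminate exactly one part, never ``at least three.'' After spending $c_1,c_2$ the residual graph $K_{2*(k-s-1),1*s}$ sits inside $K_{2*(k-1)}$ and is only guaranteed $(k-1)$-choosable, yet its lists have shrunk to size $k-2$; colouring a singleton and a pair-part with further fresh colours only deepens the deficit (four colours for three parts). The only way to close this gap is to show that some of the colours you use are already absent from many of the surviving lists --- which is exactly the quantitative use of $|L(V)|<|V|$ and of $m\leq 2s+1$ that you defer to an unexecuted ``defect Hall'' verification. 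Since that deferred bookkeeping is where the entire theorem is actually proved, the proposal is a plan with a broken central mechanism rather than a proof.
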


\begin{theorem}[\cite{shen},\cite{shen2}]
\label{last}
$Ch(K_{s+2,3,2*(k-s-2),1*s})=k$ for $s\in\{2,3,4\}$.
\end{theorem}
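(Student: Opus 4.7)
The goal is to show, for each $s\in\{2,3,4\}$ and the corresponding range of $k$, that every $k$-list assignment $L$ on $G_s := K_{s+2,3,2*(k-s-2),1*s}$ admits a proper $L$-coloring. Since $|V(G_s)|=2k+1$, this is the tight case of Conjecture~\ref{conj2}, so every counting inequality will be essentially saturated throughout.

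I would begin with standard preparatory reductions: assume $|L(v)|=k$ for every $v$, and reduce to the case in which no color lies in the common intersection of all lists of a single part. (If some color $c$ did, we could commit $c$ to that part, delete it from every other list, and induct on a strictly smaller tight Ohba instance with one fewer part and list size $k-1$.) In particular, for each of the $k-s-2$ parts of size $2$, the two lists may be assumed disjoint.

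The crux is to handle the two large parts --- $A$ of size $s+2$ and $B$ of size $3$ --- by case analysis on the intersection structure of the lists inside each. Every pair of overlapping lists inside $A$ yields a candidate color that can cover two vertices of $A$ for the price of a single color; the same device applies to $B$. When such ``two-for-one'' commitments are available in both $A$ and $B$, one reduces (after the commitments are made and the chosen colors are removed from every other list) to a residual instance that, up to a few further reductions, falls under the Enomoto-style theorem $Ch(K_{m,2*(k'-s'-1),1*s'})=k'$ with $m\le 2s'+1$ (applied with the largest remaining part playing the role of $m$), or under a direct Hall's theorem matching argument on the leftover singletons and size-$2$ parts.

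The main obstacle is the worst case in which the lists inside $A$ are pairwise almost disjoint, so no single committed color can cover more than one vertex of $A$, and similarly for $B$. Then $A$ and $B$ together could demand up to $s+5$ distinct colors, threatening to exhaust the $k$-color budget for the remaining $k-s-2$ size-$2$ parts and $s$ singletons, and Hall's condition for the residual matching has to be verified in every subcase. Overcoming this requires exploiting the density forced by $|L(v)|=k$ over $2k+1$ vertices: by a pigeonhole argument some color appears in lists across many distinct parts, and such a color must be spent inside $A\cup B$ carefully enough that the lists remaining on the rest of the graph still support a proper coloring of the residual structure. Because $s\le 4$, the whole analysis reduces to a finite enumeration of list-overlap patterns in $A\cup B$ rather than a general-$s$ combinatorial argument.
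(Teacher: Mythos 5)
First, a point of order: the paper does not prove this statement. Theorem~\ref{last} is quoted as background from \cite{shen} and \cite{shen2}, so there is no in-paper proof to compare against; the closest models are the paper's own proofs of Theorems~\ref{theorem1} and~\ref{theorem2}, which handle closely related graphs by the general method you outline. Judged against that method, your proposal correctly identifies the standard preparatory reductions (no color common to an entire part, disjoint lists within parts of size~2, a final Hall's-condition or greedy finish, and reduction of some cases to the Enomoto-type theorem), so the overall shape is right.

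However, there is a genuine gap, and it sits exactly where you locate the ``main obstacle.'' You treat as the worst case the situation in which the lists inside the part $A$ of size $s+2$ (and inside $B$ of size $3$) are pairwise almost disjoint, and you propose to escape it via a pigeonhole argument showing that some color appears ``across many distinct parts.'' That is the wrong pigeonhole: a color shared across different parts cannot be reused and buys nothing; what the argument needs is colors repeated \emph{within} a part. The tool that supplies these is Kierstead's lemma (Lemma~\ref{lemma2}, generalized as Lemma~\ref{keylemma}), which lets one assume $|L(V)|<|V|=2k+1$ at the outset and again after each color is committed. Under that assumption, $\sum_{v\in A}|L(v)|=(s+2)k\geq 4k$ against a union of size at most $2k$ forces a large number of colors to appear on at least two vertices of $A$ (and at least $k/2$ such colors in $B$), so your ``pairwise almost disjoint'' case simply cannot occur --- it is not the hard case but a vacuous one, and without invoking the lemma you have no way to rule it out or to handle it. The actual work in proofs of this type is then (i) choosing the repeated colors so that they avoid the lists of designated singleton or size-$3$ vertices (the role of the set $A$ and the case split $|A|\geq 7$ versus $|A|=6$ in the proof of Theorem~\ref{theorem1}), and (ii) explicitly verifying Hall's condition for each residual configuration. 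Your proposal performs neither step, and the promised ``finite enumeration of list-overlap patterns'' is never set up. As written, the plan names the difficulty but does not contain the idea that overcomes it.
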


To obtain partial results for Ohba's conjecture from
Theorems~\ref{first} through~\ref{last},
we consider subgraphs of the graphs studied in
the seven theorems above
that are $k$-partite and have $2k+1$ vertices.
In particular, we conclude that the choice number is $k$ for every multipartite
graph of the following form:
$K_{3*2,2*(k-3),1}$,
%$K_{3,2*(k-1)}$,
$K_{4,2*(k-2),1}$,
$K_{s+3,2*(k-s-1),1*s}$ (for all $s$), and
%$K_{4,3,2*(k-4),1*2}$, and
%$K_{5,3,2*(k-5),1*3}$.
$K_{s+2,3,2*(k-s-2),1*s}$ (for $s\leq 4$).
For $K_{4,2*(k-2),1}$, if $k$ is odd, then the result follows directly from
Theorem~\ref{useful}.  If $k$ is even, then we first color the vertex $v$ in the
unique part of size 1.  Since the remaining graph $G-v$ is $(k-1)$-choosable by
Theorem~\ref{useful}, we see that $G$ is $k$-choosable.

\section{Preliminaries}
In this section we introduce three tools that significantly reduce the number
of cases we must consider in each of our proofs.

For a graph $G=(V,E)$ and a subset $X\subseteq V$, let $G[X]$ denote the
subgraph of $G$ induced by $X$.  For a list assignment $L$ of $G$, let $L|_X$ denote
$L$ restricted to $X$, and let $L(X)$ denote the union $\cup_{u\in X}L(u)$.
If $A$ is a set of colors, let $L\backslash A$ denote the list assignment obtained
from $L$ by deleting the colors in $A$ from each $L(u)$ with $u\in V(G)$.
When $A$ consists of a single color $a$, we write $L-a$ instead of $L\backslash\{a\}$.

We say that a graph $G$ satisfies \textit{Hall's condition} for a list assignment $L$
if $|L(X)|\geq |X|$ for every subset $X\subseteq V(G)$.
A result of Hall implies the following theorem (this is commonly called Hall's Theorem):
\begin{theorem}[Hall]
\label{hall}
If $G$ satisfies Hall's condition, then there exists an
$L$-coloring of $G$ in which each vertex receives a distinct color.
\end{theorem}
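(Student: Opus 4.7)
The claim is essentially the standard marriage theorem dressed up in the language of list colorings, so the plan is to reduce it to a system of distinct representatives (SDR) problem. I would form the bipartite incidence structure $H$ with one side equal to $V(G)$, the other side equal to $L(V(G)) = \bigcup_{u\in V(G)} L(u)$, and an edge from $u$ to $c$ whenever $c \in L(u)$. An $L$-coloring in which all vertices receive distinct colors is precisely a matching in $H$ that saturates $V(G)$, i.e., an SDR for the set family $\{L(u) : u \in V(G)\}$.

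Next I would check that ``Hall's condition'' as stated in the paper matches the hypothesis of the classical marriage theorem in exactly this bipartite graph. For any $X \subseteq V(G)$, the neighborhood of $X$ in $H$ is exactly $L(X) = \bigcup_{u\in X} L(u)$, so the assumption $|L(X)| \geq |X|$ for every $X \subseteq V(G)$ is literally the Hall condition on the $V(G)$ side. By Hall's marriage theorem there exists a matching in $H$ saturating $V(G)$, which provides an assignment $\varphi : V(G) \to L(V(G))$ with $\varphi(u) \in L(u)$ and $\varphi(u) \neq \varphi(v)$ whenever $u \neq v$.

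Finally I would observe that such a $\varphi$ is automatically a proper $L$-coloring of $G$: since all the colors $\varphi(u)$ are pairwise distinct, no edge of $G$ can be monochromatic, regardless of the edge set of $G$. This yields the desired $L$-coloring in which every vertex receives a distinct color.

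There is no real obstacle here; the statement is a direct corollary of the marriage theorem, and the only thing to be careful about is the translation between the list-coloring language (colors from $L(u)$, adjacent vertices different) and the bipartite-matching language (SDR for $\{L(u)\}$). Since the paper cites ``a result of Hall'' explicitly, I expect the intended proof to simply invoke it after the one-line identification of $L(X)$ with the neighborhood of $X$ in the incidence bipartite graph.
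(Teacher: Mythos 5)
Your proposal is correct and matches the paper's intent: the paper states this result with no written proof, simply noting that it follows from ``a result of Hall,'' and your reduction to a system of distinct representatives via the incidence bipartite graph is exactly the standard argument being invoked. The only difference is that you spell out the translation explicitly, which the paper leaves implicit.
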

%\noindent
Kierstead~\cite{kierstead} used Theorem~\ref{hall} to prove the following lemma.
This result will be of great use to us.

\begin{lemma}[\cite{kierstead}] Let $L$ be a list assignment for a graph $G=(V,E)$
\label{lemma1}
and let $X\subseteq V(G)$ be a maximal non-empty subset such that $|L(X)|<|X|$.
If $G[X]$ is $L|_X$-colorable, then graph $G$ is $L$-colorable.
\end{lemma}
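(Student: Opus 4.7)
The plan is to use the hypothesized coloring on $X$ verbatim, then extend to $V\setminus X$ by invoking Theorem~\ref{hall} on a reduced list assignment obtained by deleting all of $L(X)$ from the lists outside $X$. The reason this two-stage strategy succeeds is that once $L(X)$ is removed from the lists of $V\setminus X$, no edge between $X$ and $V\setminus X$ can be monochromatic, regardless of which colors the inside coloring happens to use.

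To begin, I would fix a proper $L|_X$-coloring $c$ of $G[X]$ (which exists by hypothesis) and define the reduced assignment $L'(v)=L(v)\setminus L(X)$ for each $v\in V\setminus X$. Any proper coloring of $G[V\setminus X]$ drawn from $L'$ combines with $c$ to yield an $L$-coloring of $G$, because the colors used by $c$ lie in $L(X)$ while every $L'$-color lies outside $L(X)$. In fact, it is enough to produce an $L'$-coloring of $G[V\setminus X]$ in which \emph{all} colors are distinct, since distinctness automatically gives properness on $G[V\setminus X]$.

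The heart of the argument is verifying Hall's condition for $V\setminus X$ under $L'$. Suppose for contradiction that some $Y\subseteq V\setminus X$ satisfies $|L'(Y)|<|Y|$. Since $L'(Y)=L(Y)\setminus L(X)$, the sets $L(X)$ and $L'(Y)$ are disjoint, so
\[ |L(X\cup Y)| \;=\; |L(X)\cup L'(Y)| \;=\; |L(X)| + |L'(Y)| \;<\; |X|+|Y| \;=\; |X\cup Y|. \]
The Hall violation forces $|Y|\geq 1$, so $X\cup Y$ strictly contains $X$, contradicting the maximality of $X$. Hence Hall's condition holds for $L'$ on $V\setminus X$, and Theorem~\ref{hall} supplies an $L'$-coloring of $G[V\setminus X]$ with pairwise distinct colors. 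Combining it with $c$ finishes the proof.

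The only subtlety, and what I would flag as the main obstacle worth commenting on, is the choice to delete the \emph{entire} set $L(X)$ from each $L(v)$ with $v\notin X$, rather than only the colors actually assigned by $c$. That choice is what makes the displayed chain of equalities a bona fide disjoint union and thus lets the maximality of $X$ deliver the contradiction; a careless reduction in which $L(X)$ is replaced by the smaller set of used colors would break this clean counting step.
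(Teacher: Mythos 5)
Your proof is correct and follows essentially the same route as the paper's: delete all of $L(X)$ from the lists on $V\setminus X$, use the maximality of $X$ to verify Hall's condition for the reduced assignment, apply Theorem~\ref{hall} to get a rainbow coloring outside $X$, and combine with the hypothesized coloring of $G[X]$. Your explicit verification of Hall's condition (via the disjoint-union count $|L(X\cup Y)|=|L(X)|+|L'(Y)|<|X|+|Y|$) is just a spelled-out version of the step the paper asserts directly from maximality.
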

\begin{proof}
Let $X$ be a maximal subset of $V$ such that either $X=\emptyset$ or $|L(X)|<|X|$.
Let $C=L(X)$.  By the maximality of $X$, every subset $Y\subset V\backslash X$ satisifes
$|L(Y)\backslash C|\geq |Y|$.  Let $L'(v)=L(v)\backslash C$ for every $v\in V\backslash X$.
Note that $G\backslash V$ satisfies Hall's condition for $L'$.  Hence $G\backslash V$ is $L'$-colorable.
By hypothesis, $X$ is $L|_X$-colorable.
Since none of the colors used on $X$ are used on $V\backslash X$, we can combine the two
colorings to give an $L$-coloring of $G$.
\end{proof}

Kierstead used Lemma~\ref{lemma1} to prove the following lemma.

\begin{lemma}[\cite{kierstead}]
\label{lemma2}
A graph $G=(V,E)$ is $k$-choosable if $G$ is $L$-colorable for every $k$-list assignment $L$ such that $|L(V)| < |V|$.
\end{lemma}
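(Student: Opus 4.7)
The plan is to reduce an arbitrary $k$-list assignment on $G$ to one that falls under the hypothesis, and then invoke Lemma~\ref{lemma1} to lift the resulting coloring back up. Let $L$ be any $k$-list assignment for $G$. If $|L(V)|<|V|$ the hypothesis directly gives an $L$-coloring, so assume $|L(V)|\geq|V|$. I would first test Hall's condition on $(G,L)$: if $|L(Y)|\geq|Y|$ for every $Y\subseteq V$, Theorem~\ref{hall} produces an $L$-coloring (in fact a rainbow one) and we are done. Otherwise, choose a maximal non-empty $X\subseteq V$ with $|L(X)|<|X|$; by Lemma~\ref{lemma1} it then suffices to exhibit an $L|_X$-coloring of $G[X]$.

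The key construction is an auxiliary $k$-list assignment $L^*$ on all of $V$: set $L^*(v)=L(v)$ for $v\in X$, and for each $v\notin X$ let $L^*(v)$ be any $k$-element subset of $L(X)$. Such a subset exists because any single $w\in X$ already has $L(w)\subseteq L(X)$ with $|L(w)|\geq k$, forcing $|L(X)|\geq k$. By construction $L^*(V)\subseteq L(X)$, and therefore
\[
|L^*(V)|\leq|L(X)|<|X|\leq|V|.
\]
The hypothesis of the lemma then supplies an $L^*$-coloring $c$ of $G$. Since $L^*|_X=L|_X$, the restriction $c|_X$ is an $L|_X$-coloring of $G[X]$, and Lemma~\ref{lemma1} promotes it to an $L$-coloring of $G$.

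The main obstacle, as I see it, is spotting that the lists $L^*(v)$ for $v\notin X$ can be chosen entirely inside $L(X)$; this is exactly what keeps $|L^*(V)|$ below $|V|$ and lets the hypothesis fire, and it rests on the small but crucial observation that $|L(X)|\geq k$ whenever $X$ is non-empty. Once that observation is in place, the proof is just Hall's Theorem on one side of the dichotomy and a single invocation of Lemma~\ref{lemma1} on the other, with no further case analysis required.
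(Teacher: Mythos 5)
Your proof is correct and follows essentially the same route as the paper's proof of its generalization (Lemma~\ref{keylemma}): take a maximal deficient set $X$, build an auxiliary list assignment whose lists outside $X$ sit inside $L(X)$ so that the hypothesis applies, and lift the coloring of $G[X]$ via Lemma~\ref{lemma1}. The only difference is that the uniform list size $k$ lets you skip the paper's second case (where a vertex outside $X$ has a list larger than $|L(X)|$), precisely because of your observation that $|L(X)|\geq k$.
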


We would like to apply Lemma~\ref{lemma2} in the middle of constructing a coloring.
However, at that point the number of colors available at one vertex may be different
from the number of colors available at another vertex.  Thus we will prove a
more general version of Kierstead's second lemma, which will apply even when different
vertices may have lists of different sizes.
We need the following definition.
Let $L$ be a list assignment.
We say $G$ is \textit{$L$-size-choosable} if $G$ is $L_1$-colorable for every list
assignment $L_1$ such that $|L_1(v)|=|L(v)|$ for each $v\in V(G)$.  This is a generalization of $k$-choosability, since distinct vertices may have lists of different sizes.

Now we can state Lemma~\ref{keylemma}, which is a generalization of Lemma~\ref{lemma2} to the case where distinct vertices may have lists of different sizes.
Our proof of Lemma~\ref{keylemma} is essentially the same as Kierstead's proof of
Lemma~\ref{lemma2}.

\begin{lemma}
\label{keylemma}
%\label{lemma3}
Let L be a list assignment such that $|L(v)| < |V|$ for each $v\in V$.
A graph $G=(V,E)$ is $L$-size-choosable if $G$ is $L_1$-colorable for every list assignment $L_1$ such that $|L_1(V)| < |V|$ and $|L_1(v)|=|L(v)|$ for each $v\in V$.
\end{lemma}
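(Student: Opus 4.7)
My plan is to mimic Kierstead's proof of Lemma~\ref{lemma2}, adapted to handle varying list sizes. Fix an arbitrary list assignment $L_1$ with $|L_1(v)|=|L(v)|$ for every $v\in V$. The goal is to reduce to the hypothesis, which already covers the case $|L_1(V)|<|V|$, by using Lemma~\ref{lemma1} to trade the original $L_1$ for a ``smaller'' assignment.

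First, test Hall's condition. If $|L_1(Y)|\geq |Y|$ for every $Y\subseteq V$, then Theorem~\ref{hall} produces an $L_1$-coloring of $G$ in which each vertex receives a distinct color, which is automatically proper, and we are done. Otherwise, let $X\subseteq V$ be a maximal nonempty subset with $|L_1(X)|<|X|$. By Lemma~\ref{lemma1}, it suffices to $L_1|_X$-color $G[X]$, and I will obtain such a coloring by extending $L_1|_X$ to a list assignment on all of $V$ to which the hypothesis applies.

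Here is the extension $L_1'$. On $X$, set $L_1'(v)=L_1(v)$. Choose a set $A$ of $|V|-1-|L_1(X)|$ fresh colors; this is a nonnegative integer since $|L_1(X)|<|X|\leq |V|$. For each $v\in V\setminus X$, pick $L_1'(v)$ to be any $|L(v)|$-subset of $L_1(X)\cup A$, which is possible because $|L_1(X)\cup A|=|V|-1\geq |L(v)|$ by the hypothesis $|L(v)|<|V|$. Then $|L_1'(V)|\leq |V|-1<|V|$ and $|L_1'(v)|=|L(v)|$ for every $v$, so the assumed hypothesis yields an $L_1'$-coloring of $G$. Its restriction to $X$ is an $L_1|_X$-coloring of $G[X]$, and Lemma~\ref{lemma1} finishes the argument.

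The only delicate point is the size bookkeeping in the construction of $L_1'$: I must simultaneously guarantee $|L_1'(V)|<|V|$ (to invoke the hypothesis) and $|L_1'(v)|=|L(v)|$ for every $v\in V\setminus X$ (to keep the sizes matching $L$). The global assumption $|L(v)|<|V|$ is precisely what permits $L_1'(v)$ to live inside a universe of size $|V|-1$, so the construction goes through and no further case analysis is needed.
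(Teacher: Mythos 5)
Your proof is correct and follows essentially the same strategy as the paper's: take a maximal deficient set $X$, replace the lists outside $X$ by subsets of a universe of fewer than $|V|$ colors while preserving list sizes, apply the hypothesis to the modified assignment, and invoke Lemma~\ref{lemma1}. Your padding with fresh colors up to a universe of size exactly $|V|-1$ is a minor simplification that avoids the paper's two-case split on whether the largest list outside $X$ exceeds $|L(X)|$, but the argument is otherwise the same.
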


\begin{proof}
Fix a list assignment $L_{0}$ such that $|L_{0}(v)|=|L(v)|$ for each
$v\in V$. Suppose $G$ is $L_{1}$-colorable for every list assignment
$L_{1}$ such that $|L_{1}(V)|<|V|$  and $|L_{1}(v)|=|L(v)|$ for each
$v\in V$.
\par We show that the hypothesis of Lemma 2.1 holds for $G$ and
$L_{0}$. Consider any maximal  non-empty subset $X\subset V$ such
that $|L_{0}(X)|<|X|$. Let $A=L_{0}(X)$. Choose $u\in V-X$ such that
$|L_{0}(u)|\geq |L_{0}(w)|$ for all $w\in V-X$. We define a list
assignment $L_{1}$. We consider two cases depending on whether
$|L_{0}(u)|\leq|A|$ or not.
 \par If $|L_{0}(u)|\leq|A|$, then we
define $L_{1}$ as follows.  If $v\in X$, then $L_{1}(v)=L_{0}(v)$.
If $v\not\in X$, then $L_{2}(v)$ is a subset of $A$ of size
$|L_{0}(v)|$.
 \par If $|L_{0}(u)|>|A|$, then we define $L_{1}$  as follows. Let $B$ be
a subset of $L_{0}(u)\backslash A$ of size $|L_{0}(u)|-|A|$. If
$v\in X$, then $L_{1}(v)=L_{0}(v)$. If $v\not\in X$, then $L_{1}(v)$
is a subset of $A\cup B$ of size $|L_{0}(v)|$.
\par In the first
case, $|L_{1}(V)|=|L_{0}(X)|<|X|\leq|V|$.  In the second case,
$|L_{1}(V)|=|A\cup B|=|L_{0}(u)|<|V|$.  By hypothesis, $G$
 is $L_{1}$-colorable. Since $L_{0}|_{X}=L_{1}|_{X}$, we see that $G[X]$
  is $L_{0}|_{X}$-colorable. Thus, by Lemma $2.2$, $G$ is
  $L_{0}$-colorable.
\end{proof}

In the process of constructing a coloring, we will repeatedly choose a color to
use on 2 or 3 vertices, then delete that color from the lists of colors available
at each uncolored vertex.
We must then show that we can color the remaining uncolored
vertices from their lists.  Each time we choose a color to use on one or more vertices, Lemma~\ref{keylemma} enables us to assume that the total number of colors
available on the uncolored vertices is smaller than the number of uncolored vertices.
We use this technique frequently in the proofs in Sections~\ref{sec1} and~\ref{sec2}.

\section{Ohba's conjecture is true for $K_{4,3*2,2*(k-6),1*3}$}
\label{sec1}
We are now ready to prove our first main theorem.
In Sections~\ref{sec1} and \ref{sec2}, we will often conclude a case in the
analysis by saying
that we can finish by coloring greedily.  By this we mean that we can color the uncolored vertices greedily in order of nondecreasing list size.
Frequently we will use phrases like ``there exists some vertex in $X$, say $x_1$, such that color $c_1\in L(x_1)$''; by this we mean that without loss of generality we may assume that the desired vertex is $x_1$.
%\newpage
\begin{theorem}
\label{theorem1}
If $G=K_{4,3*2,2*(k-6),1*3}$, then $Ch(G)=k$.
\end{theorem}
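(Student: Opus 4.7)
The lower bound $Ch(G)\ge\chi(G)=k$ is immediate since $G$ is $k$-chromatic, so the task is to prove $Ch(G)\le k$. Fix any $k$-list assignment $L$; by Lemma~\ref{lemma2} we may assume $|L(V)|\le 2k$. Label the parts as $A=\{a_1,a_2,a_3,a_4\}$, $B=\{b_1,b_2,b_3\}$, $C=\{c_1,c_2,c_3\}$, pairs $D_1,\dots,D_{k-6}$ of size $2$, and singletons $\{x\},\{y\},\{z\}$. My plan is to induct on $k$ with base case $k=6$, shrinking $k$ by one at each step by disposing of a single pair $D_i$.

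For the inductive step ($k\ge 7$), pick a pair $D_i=\{d_1,d_2\}$. If $L(d_1)\cap L(d_2)\ne\emptyset$, select $c\in L(d_1)\cap L(d_2)$, color both $d_1$ and $d_2$ with $c$, and delete $c$ from every other list; the residual instance is a $(k-1)$-list assignment for $K_{4,3*2,2*(k-7),1*3}$, which is $L$-colorable by the inductive hypothesis. If instead \emph{every} pair $D_i$ has $L(d_{i,1})\cap L(d_{i,2})=\emptyset$, then because $|L(d_{i,1})|+|L(d_{i,2})|\ge 2k\ge|L(V)|$, the two lists of each pair must be exact complements in $L(V)$ and $|L(V)|=2k$ exactly. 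This rigid structure can be dissected by comparing the partitions induced by distinct pairs and exploiting the heavy list overlap forced inside $A$, $B$, and $C$, in order to locate a color usable on two vertices of one of the larger parts; after such a step Lemma~\ref{keylemma} restores the invariant $|L(V)|<|V|$ for the reduced problem, and we iterate.

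For the base case $k=6$ the graph is $K_{4,3,3,1,1,1}$ with $|V|=13$ and $|L(V)|\le 12$. A simple averaging argument shows that some color appears in at least two lists of $A$ (since $4\cdot 6=24>2\cdot 12$) and, analogously, in at least two lists of each of $B$ and $C$ (since $3\cdot 6>12$). These shared colors let us color pairs of vertices inside $A$, $B$, and $C$ efficiently, at each step deleting the used color from the remaining lists and reapplying Lemma~\ref{keylemma} so that the next step again enjoys $|L(V)|<|V|$. The residual problem — after two vertices of each of $A$, $B$, and $C$ have been absorbed — has just a few uncolored vertices, including the three singletons, and can be finished either by Hall's Theorem~\ref{hall} or greedily in order of nondecreasing list size, following the convention announced in the section introduction.

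I expect the principal obstacle to be the rigid disjoint-pairs subcase in the inductive step and the extensive case analysis required in the base case: there the partial coloring must be orchestrated carefully to avoid creating an uncolorable remnant, and the choice of which vertices of $A$, $B$, or $C$ to couple first interacts delicately with the singleton lists $L(x)$, $L(y)$, $L(z)$. Throughout, the technical lever is Lemma~\ref{keylemma}: after any partial coloring the list sizes are no longer uniform, yet Lemma~\ref{keylemma} still allows the assumption that the total number of remaining colors is strictly smaller than the number of uncolored vertices, which in turn forces enough list overlap to drive the next reduction.
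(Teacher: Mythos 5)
Your overall framing (lower bound trivial, reduce to $|L(V)|<|V|$ via Lemma~\ref{lemma2}/\ref{keylemma}, dispose of any size-2 part whose two lists intersect by using the common color and recursing) matches the easy outer shell of the paper's argument. But both of the places where you defer the real work contain genuine gaps.

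First, the inductive step. When every pair $D_i$ has disjoint lists, you propose to ``locate a color usable on two vertices of one of the larger parts'' and then ``iterate.'' Once you color two vertices of $A$, $B$, or $C$ with a common color, the residual graph is no longer of the form $K_{4,3*2,2*(k-6),1*3}$, so your induction on $k$ has nothing to iterate on: the inductive hypothesis does not apply to the reduced instance. The sentence about ``dissecting the rigid structure by comparing the partitions induced by distinct pairs'' is not an argument. The paper avoids this trap entirely: it never tries to eliminate disjoint pairs by induction. Instead, after using a common color on two vertices of a size-3 part, it takes a maximal deficient set $U$ (Lemma~\ref{lemma1}), observes that a pair with disjoint lists contributes at most one vertex to $U$ (two disjoint lists of size about $k$ would already give $|L(U)|\ge |U|$), and colors those at most $k-6$ vertices greedily, each costing every other list at most $k-6$ colors. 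That single observation is what removes the size-2 parts from the problem; it is the step your proposal is missing.

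Second, the endgame. In your base case you assert that after absorbing two vertices from each of $A$, $B$, $C$ the remainder ``can be finished either by Hall's Theorem or greedily.'' This is exactly where the content of the theorem lives, and it is false as a blanket claim: after three such steps the roughly seven remaining vertices (two more in $A$, one each in $B$ and $C$, and the three mutually adjacent singletons) may have lists of size as small as $3$ while needing six or more distinct colors, so Hall's condition can fail unless the three common colors are chosen with care. The paper's proof spends most of its length on precisely this: it reduces to a $10$-vertex configuration with explicit list-size lower bounds, sets $A=L'(y_3)\cup L'(w_1)\cup L'(w_2)\cup L'(w_3)$, splits on $|A|\ge 7$ versus $|A|=6$, and in each case chooses the repeated colors subject to constraints such as $c_2\notin L'(z_1)$ or $c_2\notin A$ before verifying Hall's condition. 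None of that is present or replaceable by a greedy finish. You also do not address the case in which a color appears on three or all four vertices of the size-4 part, nor the case in which an entire size-3 or size-4 part shares a color (the paper handles the latter by citing $Ch(K_{4,3,2*(k-5),1*2})=k-1$); your averaging counts implicitly assume no color is that concentrated. As written, the proposal is a plan for a proof rather than a proof.
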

\begin{proof}
Let $G=K_{4,3*2,2*(k-6),1*3}$.
We label the parts of sizes 4, 3, and 1 as follows: $X=\{x_1,x_2,x_3,x_4\}$, $Y=\{y_1,y_2,y_3\}$, $Z=\{z_1,z_2,z_3\}$, $W_1=\{w_1\}$, $W_2=\{w_2\}$, and $W_3=\{w_3\}$ (we do not label the parts of size 2 since they will be less important in the argument).

We begin by handling the case when all the vertices in a part of size 3 or 4 have a common color.  Clearly, we should use this common color on all the vertices in the part.
Intuitively, this case should be easier than the general case.  In fact, the analysis is straightforward.  However, for brevity, we observe that the remaining uncolored vertices will form a subgraph of $K_{4,3,2*(k-5),1*2}$ and recall that Shen et al.~\cite{shen} proved
that $Ch(K_{4,3,2*(k-5),1*2})=k-1$.  So for the rest of this proof, we assume that no
color appears on all the vertices in a part of size 3 or 4.

Similarly, if the 2 vertices in a part of size 2 have a common color, we will use
the common color on both of them.  To formalize this, we induct on the number of
parts of size 2\ in which the vertices have a common color.
The induction step is easy.  We use the common color on each vertex in the part (of size 2), remove that color from the lists of all other vertices, then
recurse on the graph with both vertices of that part deleted.
Hence, for our base case, we assume that no color appears on both
vertices in a part of size 2.

We first consider the case when no color appears on more than 2 vertices in $X$ (later, we will consider the case when a color appears on 3 vertices in $X$ and
show that that case reduces to the present case).

By Lemma~\ref{keylemma}, we can assume that some color, say $c_1$, appears on two vertices in $Y$, say $y_1$ and $y_2$.  Use $c_1$ on $y_1$ and $y_2$.
Let $L(v)$ denote the list of available colors at each vertex $v$ after we have
used color $c_1$ on $y_1$ and $y_2$.
Now let $U$ be a maximal subset of uncolored vertices $U\subset V(G)$ such that $|L(U)|<|U|$.
Note that $|L(x_1)|+|L(x_2)|+|L(x_3)|+|L(x_4)|\geq 4k-2$.
Since no color appears on three vertices of $X$,
we have $|L(x_1)\cup L(x_2)\cup L(x_3)\cup L(x_4)|\geq (4k-2)/2 = 2k-1\geq |U|$.
Hence, $U$ contains at most 3 vertices from $X$; call these $x_1$, $x_2$, and $x_3$.
Since each pair of vertices in the same part of size 2 have disjoint lists,
each part of size 2 contains at most 1 vertex of $U$.
Since each vertex in a part of size 2 has at least $k-1$
colors available, we can greedily color the vertices of $U$ in parts of size 2.
Since there are only $k-6$ parts of size 2, each vertex loses at most $k-6$ colors.
So we have reduced our problem to coloring the vertices of $U$ in parts of sizes 1, 3,
and 4.
Let $L'(v)$ denote the list of available colors at each uncolored vertex $v\in U$ after we have
colored all the vertices of $U$ in parts of size 2.
We have the following inequalities: $|L'(w_1)|\geq 5$,
$|L'(w_2)|\geq 5$, $|L'(w_3)|\geq 5$ and $|L'(y_3)|\geq 6$.
Wlog, we also have the inequalities: $|L'(x_1)|\geq 6$, $|L'(x_2)|\geq 5$,
$|L'(x_3)|\geq 5$, $|L'(z_1)|\geq 6$, $|L'(z_2)|\geq 5$, and $|L'(z_3)|\geq 5$.
We assume that each inequality holds with equality.
Let $U'$ denote this set of 10 vertices.
The set $U$ may not contain all of $U'$,
but if we can color the graph $G[U']$, that will imply that $G$ is $L$-colorable.

At this point, we observe that the case when 3 vertices of $X$ have a common color reduces to the present case.
In that case we use the common color on the three vertices on which it appears.
By the same analysis as above, we again reduce the problem to coloring the vertices of $U$ that are in parts of sizes 1, 3, and 4.  In that case $U$ contains at most 3 vertices
of $Y$ and at most 1 vertex of $X$.  By relabeling the vertices of $Y$ as $x_1$, $x_2$,
and $x_3$ and relabeling vertex $x_1$ as $y_1$, we reach the present situation.
Each of the inequalities given above still holds.

Let $A=L'(y_3)\cup L'(w_1)\cup L'(w_2)\cup L'(w_3)$.  We consider two cases: $|A|\geq 7$ and
$|A|=6$.
\bigskip

\noindent
\textbf{Case 1:} $|A|\geq 7$. \\
Since $|U'|=10$, by Lemma~\ref{keylemma}, we may assume that $|L'(U')|\leq 9$.
Since $|L'(x_1)|+|L'(x_2)|+|L'(x_3)|\geq 16$, at least $16-9=7$ colors each appear on two vertices in $X$ (since no color appears on all three vertices of $X$).
So we can choose some color $c_2$ that appears on two vertices in $X$, say $x_1$ and $x_2$, such that $c_2\notin L'(z_1)$.
Use color $c_2$ on vertices $x_1$ and $x_2$.
Let $L''(v)=L'(v)-c_2$ for each uncolored vertex $v\in U'$.
Since $|L''(z_2)|+|L''(z_3)|\geq 8$, by Lemma~\ref{keylemma} we may assume that
vertices $z_2$ and $z_3$
must have a common color, call it $c_3$.  Use color $c_3$ on vertices $z_2$ and $z_3$.
Let $L'''(v)$ denote the lists of remaining colors for each vertex $v\in U'\backslash
\{z_2,z_3\}$.
We have the following inequalities:
$|L'''(x_3)|\geq 4$,
$|L'''(y_3)|\geq 4$,
$|L'''(z_1)|\geq 6$,
$|L'''(w_1)|\geq 3$,
$|L'''(w_2)|\geq 3$,
$|L'''(w_3)|\geq 3$,
and $|L''(y_3)\cup L''(w_1) \cup L''(w_2) \cup L''(w_3)|\geq 5$.
It is easy to verify that Hall's condition holds.  Hence, $G$ is $L$-colorable.
\bigskip

\noindent
\textbf{Case 2:} $|A|=6$. \\
Since $|U'|=10$, by Lemma~\ref{keylemma}, we may assume that $|L'(U')|\leq 9$.
Since $|L'(x_1)|+|L'(x_2)|+|L'(x_3)|\geq 16$, at least $16-9=7$ colors appear on two vertices in $X$.
So we can choose some color $c_2$ that appears on two vertices in $X$, say $x_1$ and $x_2$, such that $c_2\notin A$.  Use $c_2$ on vertices $x_1$ and $x_2$.
Let $U''=U'\backslash \{x_1,x_2\}$ and $L''(v)=L'(v)-c_2$ for each vertex $v\in U''$.
By Lemma~\ref{keylemma}, we may assume that $|L''(U'')| < |U''| = 8$.
Since $|L''(z_1)|+|L''(z_2)|+|L''(z_3)|\geq 14$, we see that $14-7=7$ colors must each appear on two vertices in $Z$.
So we can choose some color $c_3$ that appears on two vertices in $Z$, say $z_1$ and $z_2$, such that $c_3\notin A$.  Use color $c_3$ on vertices $z_1$ and $z_2$.

Let $L'''(v)$ denote the lists of remaining colors for each uncolored vertex $v$.
We have the inequalities:
$|L'''(x_3)|\geq 4$,
$|L'''(y_3)|\geq 6$,
$|L'''(z_3)|\geq 4$,
$|L'''(w_1)|\geq 5$,
$|L'''(w_2)|\geq 5$, and
$|L'''(w_3)|\geq 5$.
We can finish by coloring greedily.
Hence, $G$ is $L$-colorable.
\end{proof}

\section{Ohba's conjecture is true for $K_{3*3,2*(k-5),1*2}$}
\label{sec2}
We will now prove our second main theorem.
The proof is similar to the proof of Theorem~\ref{theorem1};
however, the one fewer part of size 1 requires a more
complex argument.

\begin{theorem}
If $G=K_{3*3,2*(k-5),1*2}$, then $Ch(G)=k$.
\label{theorem2}
\end{theorem}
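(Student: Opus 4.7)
My plan is to follow closely the strategy used in the proof of Theorem~\ref{theorem1}. Label the parts of size~$3$ as $X=\{x_1,x_2,x_3\}$, $Y=\{y_1,y_2,y_3\}$, $Z=\{z_1,z_2,z_3\}$, and the parts of size~$1$ as $W_1=\{w_1\}$, $W_2=\{w_2\}$. First I would handle the easy reductions exactly as in Theorem~\ref{theorem1}: if some color is common to all three vertices in a size-$3$ part, use it there and observe that the remainder is a subgraph of $K_{3*2,2*(k-3)}$, which is $(k-1)$-choosable by Gravier's theorem; if some color is common to both vertices of a size-$2$ part, use it there and recurse on the smaller graph. So without loss of generality, no color is ``forced'' on any whole part.

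By Lemma~\ref{keylemma}, I may next assume $|L(V)|\le 2k$, and then averaging the sum $\sum_v|L(v)|\ge k(2k+1)$ over the $\le 2k$ colors yields a color of multiplicity at least $k+1$. Under the above reductions, such a color must appear on two vertices of some size-$3$ part (otherwise its multiplicity is at most $3+(k-5)+2=k$); without loss of generality $c_1\in L(y_1)\cap L(y_2)$, and I use $c_1$ on $y_1$ and $y_2$. Let $U$ be a maximal subset of the uncolored vertices with $|L'(U)|<|U|$. The bound $|L'(X)|,|L'(Z)|\ge(3k-2)/2$, which follows from the fact that no color appears on all three vertices of a size-$3$ part, restricts how many vertices of $X$ or of $Z$ can lie in $U$. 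The vertices of $U$ that lie in size-$2$ parts are then colored greedily (each retains at least $k-1-(k-5)=4$ colors), reducing the problem to coloring a subset of the nine ``core'' vertices $\{x_1,x_2,x_3,y_3,z_1,z_2,z_3,w_1,w_2\}$, with list sizes bounded below by $4$ or $5$.

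On the core I would case-split on $|A|$, where $A:=L'(y_3)\cup L'(w_1)\cup L'(w_2)$, paralleling the $|A|\ge 7$ versus $|A|=6$ split in Theorem~\ref{theorem1}. A second application of Lemma~\ref{keylemma} lets me assume $|L'(U')|<9$, and a pigeonhole on the lists in $X$ (using $\sum_i|L'(x_i)|\ge 13$ together with the no-tripling fact) produces at least five colors doubled in $X$. From these I would pick a $c_2\notin A$, use it on the two relevant vertices of $X$, and then find an analogous color $c_3$ doubled in $Z$ and avoiding the updated forbidden set. After using $c_2$ and $c_3$, the remaining five uncolored vertices should have lists large enough that Hall's condition (Theorem~\ref{hall}) holds and a greedy finish completes the coloring.

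The main obstacle will be the reduced slack compared to Theorem~\ref{theorem1}: having one fewer part of size~$1$ and one fewer of size~$2$ shrinks every list-size bound in the core by roughly~$1$ and the count of colors doubled in $X$ by roughly~$2$, so the subcase when $|A|$ attains its minimum ($|A|=5$) is tight. I expect this subcase to require either refining the $|A|$ split (perhaps into $|A|=5$, $|A|=6$, and $|A|\ge 7$), or choosing $c_2$ and $c_3$ simultaneously rather than sequentially, so that both can be guaranteed to avoid $A$ and Hall's condition can still be verified on the final residual.
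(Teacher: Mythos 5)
Your high-level outline matches the paper's strategy, but the plan as written has a genuine gap exactly where you predict it will: the tight case. Let us make the failure concrete. After using $c_1$ on two vertices of $Y$ and clearing the size-2 parts, the three lists in $X$ sum to at least $13$, and Lemma~\ref{keylemma} gives $|L''(U')|\le 8$, so you get at least $13-8=5$ colors doubled in $X$. But $|A|=|L''(y_3)\cup L''(w_1)\cup L''(w_2)|\ge 5$ always (since $c_1\notin L(y_3)$ forces $|L''(y_3)|\ge 5$), so when $|A|=5$ and all five doubled colors of $X$ happen to lie in $A$, no admissible $c_2$ exists, and the same shortfall recurs for $c_3$ in $Z$. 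Refining the split on $|A|$ does not create the missing color, and ``choosing $c_2$ and $c_3$ simultaneously'' at that late stage has no more colors to work with than choosing them sequentially. So the endgame cannot be a straight port of Theorem~\ref{theorem1}.

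The idea you are missing is that the compensating choice must be made \emph{before} any vertex is colored, not in the endgame. The paper's Claim~\ref{lemmaclaim} selects, at the outset, a color $c_1$ doubled in $X$ and a color $c_2\ne c_1$ doubled in $Y$ such that some vertex $z_1\in Z$ satisfies $\{c_1,c_2\}\cap L(z_1)=\emptyset$ (the proof is a pigeonhole over the three vertices of $Z$ applied to the at least $k\ge 5$ colors doubled in each part). The designated vertex $z_1$ then keeps a list of size at least $5$ while having only $4$ uncolored neighbors among the core vertices, so it can always be colored last; this one saved vertex is precisely what replaces the lost singleton part. Even with that setup, the case $|A|=5$ still requires a further subcase analysis (whether two vertices of $Y$ or of $Z$ share a color outside $A$; if not, one exploits $\sum_i|L''(y_i)|\ge 13>2|A|$ to place a single color on one vertex of $Y$ and then finish), which has no analogue in Theorem~\ref{theorem1} and is absent from your sketch. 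As a minor positive point, your reduction of the ``common color on a whole size-3 part'' case to $K_{3*2,2*(k-3)}$ via the Gravier--Maffray theorem is correct and arguably cleaner than the paper's induction.
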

\begin{proof}
It is easy to handle the case when all the vertices in a part of size 2 or size 3
have a common color.
We will use that common color on all the vertices in that part.
To formalize this, we use induction on the number of parts of size 2 or 3
in which all the vertices have a common color.

The induction step is easy.
Let $S$ be a part (of size 2 or 3) in which the vertices have a common color.
We use the common color on each vertex in $S$, remove that color from the lists of all other vertices, then recurse on $G-S$.
If $S$ has size 2, then we recurse on a graph with one fewer part of size 2.
If $S$ has size 3, then we recurse on a proper subgraph of the graph we consider
when $S$ has size 2 (so the claim follows).
Hence, for our base case, we assume that no color appears in the lists of all
the vertices in a part of size 2 or 3.

We label the parts of sizes 3 and 1 as follows:
$X=\{x_1,x_2,x_3\}$,
$Y=\{y_1,y_2,y_3\}$,
$Z=\{z_1,z_2,z_3\}$,
$W_1=\{w_1\}$, and
$W_2=\{w_2\}$
(we do not label the parts of size 2 because they will be less important in the argument).
We would like to find 2 vertices in $X$, say $x_1$ and $x_2$, with a common color,
say $c_1$, and
2 vertices in $Y$, say $y_1$ and $y_2$, with a common color, say $c_2\neq c_1$, such that there exists a
vertex in $Z$, call it $z_1$, such that $\{c_1,c_2\}\cap L(z_1)=\emptyset$.
It will also be fine if part $Z$ is interchanged with part $X$ or part $Y$
in these conditions.  We now show that we can do this.

\begin{claim}
\label{lemmaclaim}
We can find 2 vertices in $X$, say $x_1$ and $x_2$, with a common color,
say $c_1$, and
2 vertices in $Y$, say $y_1$ and $y_2$, with a common color, say $c_2\neq c_1$, such that there exists a
vertex in $Z$, call it $z_1$, such that $\{c_1,c_2\}\cap L(z_1)=\emptyset$.
It will also be fine if part $Z$ is interchanged with part $X$ or part $Y$
in these conditions. \end{claim}
\begin{proof}[Proof of claim~\ref{lemmaclaim}]
By Lemma~\ref{keylemma}, we can assume that $|L(V)|<|V|=2k+1$.
Note that $|L(x_1)|+|L(x_2)|+|L(x_3)|=3k$.  Since $|L(x_1)\cup L(x_2)\cup L(x_3)|\leq |L(V)|\leq 2k$, there are at least $k$ colors that each
show up on at least 2 vertices in $X$; the same is true for parts $Y$ and $Z$.
Recall that $k\geq 5$.
Note that if at least 4 colors each appear on 2 vertices in $X$ and
also each appear on 2 vertices in $Y$, then the claim holds for the following reason.
Each of these 4 colors does not appear on at least 1 vertex of $Z$.  Since there
are 3 vertices in $Z$, 2 of these 4 colors (call them $c_1$ and $c_2$)
``miss'' the same vertex in $Z$.
So we can use color $c_1$ on 2 vertices of $X$ and use color $c_2$ on 2 vertices of $Y$.
Hence, we may assume that some color that appears on 2 vertices of $X$ must
appear on either 0 or 1 vertices of $Y$; we consider these two cases separately.

Suppose that color $c_1$ appears on 2 vertices in $X$, but that color $c_1$
does not appear on any vertex in $Y$.
Now we can use color $c_1$ on 2 vertices of $X$, and use any choice of $c_2\neq c_1$ on 2 vertices in $Z$.  Hence, we can choose colors $c_1$ and $c_2$ as desired.

Instead suppose that color $c_1$ appears on 2 vertices of $X$, but in $Y$ color $c_1$ only appears on one vertex, say $y_1$.  Recall that at least $k$ colors appear on 2 vertices in $Z$.  Consider the at least $k-1\geq 4$ colors other than $c_1$ that appear on 2 vertices in $Z$.  If one of these colors does not appear at $y_2$ or $y_3$, then the claim holds.
So we may assume that at least 4 of the colors that each appear on two vertices in $Z$
also appear on both $y_2$ and $y_3$.  Again, the claim holds, as in the first
paragraph of the proof.
\end{proof}

Use color $c_1$ on vertices $x_1$ and $x_2$.
Let $G'=G\backslash \{x_1,x_2\}$ and $L'=L\backslash c_1$.
Let $U$ be a maximal nonempty subset $U\subseteq V(G')$ such that $|L(U)|<|U|$.
By Lemma 2.1, $G'$ is $L'$-colorable if $G'[U]$ is $L'|_U$-colorable.
Thus, the remainder of our argument will show that $G'[U]$ is $L'|_U$-colorable.
Note that each part of size 2 has at most one vertex in $U$ (otherwise, $|L(U)|\geq 2k-1 \geq |U|$, since the lists of any two vertices in the same part of size 2  must be disjoint).  Since each vertex in a part of size 2 has at least $k-1$ colors available, we can greedily color all the vertices of $U$ in parts of size 2 without using color $c_2$.
(Note that the size of the list for each vertex decreases by at most $k-5$
since there are only $k-5$ parts of size 2.)
So now we only need to color the vertices of $U$ in parts of sizes 3 and 1.
In fact, we will color all the uncolored vertices (not just those in $U$)
in parts of sizes 3 and 1.
Let $U'$ denote the set of uncolored vertices in parts of sizes 3 and 1.
Let $L''(v)$ denote the lists of colors available at each vertex $v\in U'$
after we have colored all the vertices of $U$ in parts of size
2.  We have the following inequalities:
$|L''(x_3)|\geq 5$,
$|L''(w_1)|\geq 4$,
$|L''(w_2)|\geq 4$,
Without loss of generality, we have the additional inequalitites:
$|L''(y_1)|\geq 5$,
$|L''(y_2)|\geq 4$,
$|L''(y_3)|\geq 4$,
$|L''(z_1)|\geq 5$,
$|L''(z_2)|\geq 4$,
$|L''(z_3)|\geq 4$.
We assume that each of the inequalities holds with equality.
Let $A=L''(x_3)\cup L''(w_1)\cup L''(w_2)$.  We consider two cases: $|A|\geq 6$ and
$|A|=5$.
\bigskip

\noindent
\textbf{Case 1:} $|A|\geq 6$. \\
Use color $c_2$ on vertices $y_1$ and $y_2$.
Note that $|L''(z_1)|\geq 5$ (recall that $c_1\notin L(z_1)$)
and that vertex $z_1$ is only adjacent to 4
uncolored vertices in $G[U']$.  Hence, any coloring of the other 6 uncolored
vertices in $U'$ can be extended to $z_1$.  So let $U''=U'\backslash\{y_1,y_2,z_1\}$.
Now we need to show that $G[U'']$ is $L''|_{U''}$-colorable.
By Lemma~\ref{keylemma}, we may assume that $|L''(U'')| < |U''| = 6$.
Since $|L''(z_2)-c_2|+|L''(z_3)-c_2|\geq 6$, there exists a color $c_3\in L''(z_2)\cap L''(z_3)$;
use $c_3$ on vertices $z_2$ and $z_3$.
Let $U'''=U''\backslash \{z_2,z_3\}$ and
$L'''(v)=L''(v)-\{c_3\}$ for every vertex $v\in U'''$.
Now we have $|L'''(w_1)|\geq 2$, $|L'''(w_2)|\geq 2$, $|L'''(y_3)|\geq 3$,
$|L'''(x_3)|\geq 3$, and $|L'''(x_3)\cup L'''(w_1)\cup L'''(w_2)|\geq 4$.
It is straightforward to verify that the four remaining uncolored vertices satisfy Hall's condition.
As a result, $G[U''']$ is $L'''|_{U'''}$-colorable.  Thus, $G$ is $L$-colorable.
\bigskip

\noindent
\textbf{Case 2:} $|A|=5$. \\
%Let $A=L''(x_3)\cup L''(w_1)\cup L''(w_2)$.
We would like to find two vertices
both in $Y$ (or both in $Z$), call them $y_1$ and $y_2$, such that there exists a color $c_3\in (L''(y_1)\cap L''(y_2))\backslash A$.
(In Claim~\ref{lemmaclaim} we previously specified two vertices to be $y_1$ and $y_2$; now we relabel vertices if necessary.)
We consider two subcases, depending on whether or not we can find such vertices.
\bigskip

\noindent
\textbf{Subcase 2.1:}
There exists $c_3\in(L''(y_1)\cap L''(y_2))\backslash A$. \\
Use $c_3$ on vertices $y_1$ and $y_2$.
Let $U'=U\backslash \{y_1,y_2\}$. and Let $L'''(v)=L''(v)- c_3$ for all $v\in U'$.
Note that $|L'''(z_1)|+|L'''(z_2)|+|L'''(z_3)|\geq 11$.  Since $|A|=5$ and no
color in $A$ appears on all three vertices of $Z$, some vertex in $Z$ has a color
available that is not in $A$.  Wlog, say this is color $c_4$ on vertex $z_1$; use color $c_4$ on $z_1$.
There are 6 remaining uncolored vertices.
By Lemma~\ref{keylemma}, we can assume that the union of the lists for these 6 remaining vertices
has size at most 5.
%$|L(U''-\{z_1\})| < |U''-\{z_1\}| = 6$.
Since $|L'''(z_2)|+|L'''(z_3)|\geq 6$,
there exists $c_5\in L(z_2)\cap L(z_3)$.  After using $c_5$ on vertices $z_2$ and $z_3$,
we can color the four remaining uncolored vertices greedily.  Thus, $G$ is $L$-colorable.
\bigskip

\noindent
\textbf{Subcase 2.2:}
$(L''(y_i)\cap L''(y_j))-A=\emptyset$ for all $i\neq j\in \{1,2,3\}$. \\
By symmetry, we can also assume $(L''(z_i)\cap L''(z_j))-A=\emptyset$
for all $i\neq j\in \{1,2,3\}$.
Since $|L''(y_1)|+|L''(y_2)|+|L''(y_3)|\geq 13>2|A|=10$, there exists some vertex of $Y$, say $y_1$, with $c_4\in L(y_1)-A$.  Use color $c_4$ on $y_1$ and let $U''=U'-y_1$.
(Note that color $c_4$ is available on at most one vertex in $Z$.)
By Lemma~\ref{keylemma}, we may assume that $|L''(U'')-c_4| < |U''|=8$.
Since $|L''(y_2)|+|L''(y_3)|\geq8$, there exists a color $c_5\in L''(y_2)\cap L''(y_3)$.
 Use color $c_5$ on $y_2$ and $y_3$.

Some vertex in $Z$, say $z_1$, has at least 4 available colors.
Note that $z_1$ is only adjacent to 3 uncolored vertices in $U''$.
Hence, any coloring of the other 6 uncolored vertices in $U''$ can be extended to $z_1$.  Let $U'''=U''\backslash\{y_1,y_2,y_3,z_1\}$ and let $L'''(v)=L''(v)\backslash\{c_4,c_5\}$ for each vertex $v\in U'''$.  By Lemma~\ref{keylemma}, we may assume that $|L'''(U''')|<|U'''|=5$.  Since $|L'''(z_2)|+|L'''(z_3)|\geq 5$, vertices $z_2$ and $z_3$ have a common color, call it $c_6$.  Use color $c_6$ on $z_2$ and $z_3$, then color the remaining vertices greedily.
Thus, $G$ is $L$-colorable.
\end{proof}

\section{Discussion}
We believe that our methods can be extended to prove Ohba's conjecture for
more multipartite graphs with three parts each of size at least 3.
In particular, we suspect that our methods will be suitable to prove Ohba's
conjecture for
$K_{4*2,3,2*(k-7),1*4}$ and
$K_{4*3,2*(k-8),1*5}$.
Further, we believe that our methods will be sufficient to prove Ohba's conjecture for
$K_{3*4,2*(k-7),1*3}$.
\section{Acknowledgements}
We would like to express our gratitude to the referees for their careful reading and helpful comments.  %In particular, we are indebted to the referees for presenting Lemma~\ref{keylemma}, which helps shorten the proof of Theorem~\ref{theorem2}.

\end{document}